\newtheorem{theorem}{Theorem}[section]
\newtheorem{lemma}[theorem]{Lemma}
\newtheorem{cor}[theorem]{Theorem}
\theoremstyle{definition}
\theoremstyle{remark}
\newtheorem{remark}[theorem]{Remark}
\numberwithin{equation}{section}
\newcommand{\Z}{\mathbb Z}
\newcommand{\C}{\mathbb C}
\newcommand{\F}{\mathbb F}
\newcommand{\Fh}{{}_2F_1}
\newcommand{\Fha}[4]{{}_2F_1\left(\left. \begin{array}{cc} #1 & #2 \\     \  & #3  \end{array}\right| #4 \right)}
\newcommand{\tr}{\textrm{tr}}
\newcommand{\bin}[2]{\left( {#1 \atop #2} \right)}
\begin{document}

\title[Gaussian Hypergeometric Evaluations of Traces of Frobenius]{Gaussian Hypergeometric Evaluations of Traces of Frobenius for Elliptic Curves}

\author{Catherine Lennon}
\address{Department of Mathematics, Massachusetts Institute of Technology, 77 Massachusetts Avenue, Cambridge, MA 02139}
\email{clennon@math.mit.edu}
\thanks{This work was supported by the Department of Defense (DoD) through the National Defense Science \& Engineering Graduate Fellowship (NDSEG) Program.}

\subjclass[2000]{Primary 11T24, 11G20; }

\date{\today}

\commby{Ken Ono}

\begin{abstract} We present here a formula for expressing the trace of the Frobenius endomorphism of an elliptic curve $E$ over $\mathbb{F}_q$ satisfying $j(E)\neq 0,1728$ and $q\equiv 1 \pmod{12}$ in terms of special values of Gaussian hypergeometric series.  This paper uses methods introduced in \cite{fuselier} for one parameter families of curves to express the trace of Frobenius of $E$ as a function of its $j$-invariant and discriminant instead of a parameter, which are more  intrinsic characteristics of the curve.
\end{abstract}

\maketitle

\section{Introduction}
Gaussian hypergeometric series  were first defined by Greene in \cite{greene} as finite field analogues of the classical hypergeometric series. Since then, they have been shown to possess interesting arithmetic properties; in particular, special values of these functions can be used to express the number of $\F_p$-points on certain varieties.  For example, results in \cite{koike} and \cite{ono} presented formulas expressing the number of $\F_p$-points of elliptic curves in certain families as special values of Gaussian hypergeometric series. These formulas, however, only used trivial and quadratic characters as parameters, and the task remained to find some expressions with parameters that were characters of higher orders \cite{onoweb}.

Recently in \cite{fuselier}, Fuselier provided formulas for certain families of elliptic curves which involved Gaussian hypergeometric series with characters of order 12 as parameters, under the assumption that $p\equiv 1\pmod{12}$ (which is necessary to assure that characters of order 12 exist). In \cite{lennon}, we provide a formula for the trace  of Frobenius for curves with 3-torsion and $j$-invariant not equal to 0,1728 using characters of order three. Again, we must  assume that $p\equiv 1\pmod{3}$.

In all of the  previous results, the character parameters in the hypergeometric series depended on the family of curves considered. In addition, the values at which the hypergeometric series were evaluated were functions of the coefficients, and so depended  on the model used.  Here, we give a general formula expressing the number of $\F_p$-points of an elliptic curve in terms of more intrinsic properties of the curve.  Consequently, this characterization is coordinate-free and can be used to describe the number of points on any elliptic curve $E(\F_{p^e})$, with $j(E)\neq 0, 1728$ and $p^e\equiv 1 \pmod{12}$ without having to put the curve in a specific form. In particular, the formula holds over $\F_{p^2}$ for all odd $p$ whenever $j\neq 0, 1728$.

Let $q=p^e$ be a power of an odd prime and $\F_q$ the finite field of $q$ elements. Extend each character $\chi \in \widehat{\F^*_q}$ to all of $\F_q$ by setting $\chi(0):=0$.  For any two characters $A,B\in \widehat{\F^*_q}$ one can  define the normalized Jacobi sum by
\begin{equation}
\bin{A}{B}:=\frac{B(-1)}{q}\sum_{x\in \F_q}A(x)\bar{B}(1-x)=\frac{B(-1)}{q} J(A, \bar{B}),
\end{equation}
where $J(A,B)$ denotes the usual Jacobi sum.

Recall the definition of the  \emph{Gaussian hypergeometric series over $\F_q$} first defined by Greene in \cite{greene}. For any positive integer $n$ and characters $A_0,A_1,...,A_n$ and $B_1,B_2,...,B_n \in \widehat{\F_q^*}$, the Gaussian hypergeometric series $_{n+1}F_n$ is defined to be
\begin{equation}
 _{n+1}F_n \left( \left.{\begin{array}{cccc}
                A_0 & A_1 & ... & A_n \\
                \  & B_1 & ...  & B_n
              \end{array}
}\right|x\right)_q:=\frac{q}{q-1}\sum_{\chi \in \widehat{\F_q^*}}\left(A_0\chi \atop \chi\right)\left(A_1 \chi\atop B_1 \chi\right)...\left(A_n \chi \atop B_n \chi \right)\chi(x).
\end{equation}

If we let $a(E(\F_q))$ denote the trace of the Frobenius endomorphism on $E$, then
$$a(E(\F_q))=q+1-|E(\F_q)|$$
and the following theorem expresses this value, and therefore also $|E(\F_q)|$,  in terms of Gaussian hypergeometric series.

\begin{theorem}\label{genweier} Let $q=p^e$, $p>0$ a prime and $q\equiv 1 \pmod{12}$. In addition, let $E$ be an elliptic curve over $\F_q$ with $j(E)\neq 0,1728$ and $T \in \widehat{\F_q^*}$ a generator of the character group. The trace of the Frobenius map on $E$ can be expressed as
\begin{equation}\label{genweiereqn}
a(E(\F_q))=-q\cdot T^{\frac{q-1}{12}}\left(\frac{1728}{\Delta(E)}\right)\cdot\Fha{T^{\frac{q-1}{12}}}{T^{\frac{q-1}{12}}}{T^{\frac{2(q-1)}{3}}}{\frac{j(E)}{1728}}_q,
\end{equation}
where $\Delta(E)$ is the discriminant of $E$.
\end{theorem}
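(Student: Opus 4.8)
The plan is to reduce to a short Weierstrass model and a quadratic character sum, reduce further to a single curve of each $j$-invariant, evaluate the resulting sum by the Gauss- and Jacobi-sum methods of \cite{fuselier}, and then rephrase the answer in terms of $j(E)$ and $\Delta(E)$. Since $j(E)\neq 0,1728$, the curve $E$ is $\F_q$-isomorphic to one of the form $y^2=x^3+ax+b$ with $ab\neq 0$, and counting points fiberwise in $x$ gives $|E(\F_q)|=q+1+\sum_{x\in\F_q}\phi(x^3+ax+b)$, where $\phi=T^{\frac{q-1}{2}}$ is the quadratic character (with $\phi(0)=0$); hence $a(E(\F_q))=-\sum_{x}\phi(x^3+ax+b)$. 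Moreover, any two elliptic curves over $\F_q$ with the same $j$-invariant $j_0\notin\{0,1728\}$ differ by a quadratic twist by some $d\in\F_q^*$, and under such a twist $a(E^{(d)}(\F_q))=\phi(d)\,a(E(\F_q))$, $j(E^{(d)})=j(E)$, and $\Delta(E^{(d)})=d^{6}\Delta(E)$. Since $T^{\frac{q-1}{12}}(d^{6})=T^{\frac{q-1}{2}}(d)=\phi(d)$, the right-hand side of \eqref{genweiereqn} also scales by $\phi(d)$, so it suffices to prove \eqref{genweiereqn} for one convenient model of each $j_0$. I will take the family $E_m:\ y^2=x^3+3mx+2m$ with $m\in\F_q\setminus\{0,-1\}$, for which $\Delta(E_m)=-1728\,m^2(m+1)$ and $j(E_m)/1728=m/(m+1)$; as $m$ ranges over $\F_q\setminus\{0,-1\}$ this realizes each $j_0\notin\{0,1728\}$ exactly once.

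The core of the proof is the evaluation of $S_m:=\sum_{x\in\F_q}\phi(x^3+3mx+2m)$. Following the method of \cite{fuselier}, one expresses $S_m$ in terms of Gauss and Jacobi sums and then uses the Hasse--Davenport product relation to pass between Gauss sums for characters of orders $2$, $3$, $4$, $6$ and $12$, producing Gauss sums for characters of order exactly $12$ — this is the step that forces $q\equiv1\pmod{12}$ and generates the exponent $\frac{q-1}{12}$. Collecting the resulting constants and performing the substitution that matches Greene's integral representation for $\Fh$:
\begin{equation*}
\Fha{A}{B}{C}{x}_q=\frac{BC(-1)}{q}\sum_{y\in\F_q}B(y)\,\overline{B}C(1-y)\,\overline{A}(1-xy)\qquad(x\neq0),
\end{equation*}
should show that $S_m=q\,T^{\frac{q-1}{12}}\!\left(1728/\Delta(E_m)\right)\cdot\Fha{T^{\frac{q-1}{12}}}{T^{\frac{q-1}{12}}}{T^{\frac{2(q-1)}{3}}}{\frac{m}{m+1}}_q$, which is \eqref{genweiereqn} for $E=E_m$ in view of $a(E_m(\F_q))=-S_m$ and $m/(m+1)=j(E_m)/1728$. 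I expect this bookkeeping to be the main obstacle: one must track the Gauss-sum normalizations carefully, use Greene's parameter-permutation and change-of-variable ($x\mapsto x/(x-1)$) identities for $\Fh$ so that the argument comes out exactly as $j/1728$ rather than some other rational function of $m$, and check that the characters produced have order precisely $12$ and $3$ and that the accumulated constant collapses to exactly $T^{\frac{q-1}{12}}(1728/\Delta)$. (Alternatively, one could start from the known expression for $a(E(\F_q))$ via an $\Fh$ with quadratic parameters in Legendre form, à la \cite{koike, ono}, and deduce \eqref{genweiereqn} as a finite-field hypergeometric transformation; this route must contend with the extra difficulty that the Legendre parameter need not be $\F_q$-rational.)

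Finally, combining this evaluation of $S_m$ with the twist-covariance recorded in the first paragraph proves \eqref{genweiereqn} for every $E/\F_q$ with $j(E)\neq0,1728$ and $q\equiv1\pmod{12}$. Since every step is an identity between character sums over $\F_q$, nothing forces $q$ to be prime; the formula therefore holds for all prime powers $q\equiv1\pmod{12}$, and in particular over $\F_{p^{2}}$ for every prime $p\ge5$, since $p^{2}\equiv1\pmod{12}$ for such $p$.
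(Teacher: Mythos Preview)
Your proposal is correct in outline and uses the same core machinery as the paper (Fuselier's additive-character expansion, the Hasse--Davenport product relations to produce order-$12$ Gauss sums, and Greene's $\Fh$ transformations), but the organization differs in two noteworthy ways. First, your twist-covariance reduction to the one-parameter family $E_m$ is valid but unnecessary: the paper carries out the Gauss-sum computation for a general short Weierstrass model $y^2=x^3+ax+b$ with $ab\neq 0$ all at once, so specializing to $a=3m$, $b=2m$ buys no simplification in the character-sum manipulation. Second, and more importantly for the ``bookkeeping obstacle'' you anticipate, the paper does \emph{not} aim for the argument $j/1728$ directly. Instead it first proves an intermediate identity (Theorem~2.1),
\[
a(E(\F_q))=-q\cdot T^{\frac{q-1}{4}}\!\left(\tfrac{a^3}{27}\right)\cdot\Fha{T^{\frac{q-1}{12}}}{T^{\frac{5(q-1)}{12}}}{T^{\frac{q-1}{2}}}{-\tfrac{27b^2}{4a^3}}_q,
\]
whose argument $-27b^2/(4a^3)$ is what falls out naturally from the Davenport--Hasse step; only afterward does it apply Greene's $x\mapsto 1-x$ and $x\mapsto 1/x$ transformations in succession to convert the argument to $4a^3/(4a^3+27b^2)=j(E)/1728$ and collapse the accumulated character value to $T^{\frac{q-1}{12}}(1728/\Delta)$. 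This two-stage structure is precisely what tames the constants you are worried about, and I would recommend adopting it rather than trying to land on $j/1728$ in one pass.
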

 \begin{remark} When $p\not\equiv 1 \pmod{12}$, information about $a(E(\F_p))$ can still be gained from Theorem \ref{genweier}. Because $p^2 \equiv 1\pmod{12}$ for all $p>3$, Theorem \ref{genweier} applies with $q=p^2$. Using the relationship
 $$a(E(\F_p))^2=a(E(\F_{p^2}))+2p$$
 one can then determine $a(E(\F_p))$ up to a sign.
 \end{remark}

\section{Proof of Theorem \ref{genweier}}

\subsection{Elliptic Curves in Weierstrass Form}
Theorem \ref{genweier} will follow as a consequence  of the next proposition after applying transformation laws for Gaussian hypergeometric series.
Recall that an elliptic curve can be written in Weierstrass form as
$$E: y^2=x^3+ax+b.$$
We may express the trace of Frobenius on the curve above as a special value of a hypergeometric function in the following way.
\begin{theorem} \label{weier}
Let $q=p^e$, $p>3$ a prime and $q\equiv 1\pmod{12}$. Let $E(\F_q)$ be an elliptic curve over $\F_q$ in Weierstrass form with $j(E)\neq 0, 1728$. Then the trace of the Frobenius map on $E$ can be expressed as
$$a(E(\F_q))=-q\cdot T^{\frac{q-1}{4}}\left(\frac{a^3}{27}\right) \cdot
\Fha{ T^{\frac{q-1}{12}} }{ T^{\frac{5(q-1)}{12}} }{ T^{\frac{q-1}{2}}}{-\frac{27b^2}{4a^3}}_q.$$
\end{theorem}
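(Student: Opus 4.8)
The proof follows the strategy of \cite{fuselier}: rewrite the relevant quadratic character sum so that Greene's Jacobi-sum identities apply, then collapse the result to a single character sum of the form defining ${}_2F_1$. The starting point is the elementary count $|E(\F_q)|=q+1+\sum_{x\in\F_q}\phi(x^3+ax+b)$, where $\phi=T^{(q-1)/2}$ is the quadratic character (extended by $\phi(0)=0$), so that
$$a(E(\F_q))=-\sum_{x\in\F_q}\phi(x^3+ax+b).$$
First I would clear denominators by the substitution $x\mapsto x/3$, turning the cubic into $\tfrac{1}{27}(x^3+9ax+27b)$ (this produces a harmless leading factor $\phi(3)$ and ties the coefficients to the quantity $-27b^2/(4a^3)$ that must appear as the argument). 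Using $j(E)\neq 0,1728$, which forces $ab\neq 0$, I would then pull out the dominant monomial and apply an appropriate scaling/inversion to bring the cubic into the normalized shape used in \cite{fuselier}, writing the sum as a character sum over a product of (shifted) linear factors whose ratio is governed by $-27b^2/(4a^3)$.

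Next I would introduce the higher-order characters. Writing $\phi(v)=g(\phi)^{-1}\sum_{t\in\F_q}\phi(t)\theta(tv)$ for an additive character $\theta$, or equivalently expanding the relevant factors via Greene's finite-field binomial theorem together with the cube-root identity $\#\{x\in\F_q:x^3=u\}=\sum_{\psi}\psi(u)$ for $u\neq0$ (where $\psi$ runs over the characters of order dividing $3$; valid since $q\equiv 1\pmod 3$), the sum unfolds into a multiple sum over characters. Applying orthogonality in the summation variable $x$ collapses this to a single sum over $\chi\in\widehat{\F_q^*}$ whose $\chi$-th term is a product of Jacobi sums $\bin{\cdot}{\cdot}$ times $\chi\!\left(-27b^2/(4a^3)\right)$. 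The orders $(q-1)/12$ and $5(q-1)/12$ of the numerator parameters then emerge from combining the quadratic character coming from $y^2$ with the cube-root-of-unity characters and with a reindexing of the type $\chi\mapsto\chi^2$ (a finite-field quadratic transformation), using the Hasse--Davenport product and multiplication relations for $g(\chi^2)$ and $g(\chi^3)$; note the internal consistency that $T^{(q-1)/12}\cdot T^{5(q-1)/12}=T^{(q-1)/2}=\phi$ equals the denominator parameter, which is exactly the structure such a transformation produces. The Gauss-sum constants that these relations throw off, together with $g(\phi)^2=\phi(-1)q$, should reassemble into the scalar $-q\,T^{(q-1)/4}(a^3/27)$, and matching the definition of ${}_2F_1$ then yields the stated identity.

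The main obstacle is the character bookkeeping in the penultimate step: one must organize the unfolded sum so that precisely $T^{(q-1)/12}$ and $T^{5(q-1)/12}$ appear as numerator parameters (rather than merely order-$6$ characters) and $T^{(q-1)/2}$ as the denominator parameter, which requires choosing the right intermediate substitution and applying the Hasse--Davenport relations with the correct roots of unity and Greene's sign and normalization conventions for $\bin{A}{B}$. A secondary but genuine subtlety is that several of the identities used (the binomial expansion, the cube-root count, the convention $\chi(0)=0$) are off by $O(1)$ when an argument vanishes, so I would need to track the contributions of $x=0$ and of the terms where a character hits $0$, and verify that these corrections cancel against one another and against the $+1$ in $|E(\F_q)|=q+1+\cdots$, leaving the clean closed form. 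Once Theorem~\ref{weier} is in hand, Theorem~\ref{genweier} follows by re-expressing $a^3/27$ and $-27b^2/(4a^3)$ through $j(E)$ and $\Delta(E)$ and applying standard ${}_2F_1$ transformation laws, as indicated in the excerpt.
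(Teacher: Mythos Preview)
Your high-level plan---expand the point count through additive characters/Gauss sums, invoke Hasse--Davenport, then reindex into the ${}_2F_1$ shape---matches the paper's, but several of the concrete mechanisms you propose would not carry the argument. The substitution $x\mapsto x/3$ and ``pulling out the dominant monomial'' to reach linear factors cannot work: for generic $a,b$ the cubic $x^3+ax+b$ has no rational factorization over $\F_q$, so there is nothing to factor, and the cube-root identity $\#\{x:x^3=u\}=\sum_{\psi^3=\varepsilon}\psi(u)$ is inapplicable because the cubic is not a function of $x^3$ alone. Similarly, the reindexing $\chi\mapsto\chi^2$ you invoke is a quadratic \emph{transformation} of the argument, not a parameter shift; it is not how the orders $(q-1)/12$ and $5(q-1)/12$ arise here.

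What the paper actually does is take the additive-character count $q(|E(\F_q)|-1)=\sum_{x,y,z}\theta(zP(x,y))$ from the outset, expand each $\theta$-factor via $\theta(\alpha)=\tfrac{1}{q-1}\sum_m G_{-m}T^m(\alpha)$ (this is where the hypothesis $a,b\neq 0$ is used), and collapse by orthogonality to the single sum $\sum_i G_{-i}\,G_{3i}\,G_{-\frac{q-1}{2}-2i}\,T^{-3i}(a)\,T^{\frac{q-1}{2}+2i}(b)$. The twelfth-order characters enter not through any change of variable but by applying the Hasse--Davenport relation with $m=3$ to $G_{3i}$ and with $m=2$ to $G_{-\frac{q-1}{2}-2i}=G_{-2(\frac{q-1}{4}+i)}$; pairing the resulting six Gauss sums into normalized Jacobi sums via \eqref{jacgauss} and then making the simple linear shift $i\mapsto i-\tfrac{q-1}{4}$ already produces the parameters $T^{(q-1)/12},\,T^{5(q-1)/12};\,T^{(q-1)/2}$ and the argument $-27b^2/(4a^3)$. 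Your instinct that the $O(1)$ defects must be tracked is correct: in the paper they come solely from the anomaly $G_iG_{-i}=qT^i(-1)-(q-1)\delta_{i,0}$ at $i=0$, and that correction term cancels against the residual pieces of the $A+B+C+D$ decomposition.
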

\begin{proof}
The method of this proof follows similarly to that given in \cite{fuselier}.  Let $|E(\F_q)|$ denote the number of projective points of $E$ in $\F_q$.  If we let
$$P(x,y)=x^3+ax+b-y^2,$$
 then  $|E(\F_q)|$ may be expressed as
$$|E(\F_q)|-1=\#\{(x,y)\in \F_q \times \F_q: P(x,y)=0\}.$$
Define the additive character $\theta: \F_q \to \C^*$ by
\begin{equation}
\theta(\alpha)=\zeta^{\tr(\alpha)}
\end{equation}
 where  $\zeta=e^{2\pi i/p}$ and $\tr: \F_q \to \F_p$ is the trace map, ie $\tr(\alpha)=\alpha + \alpha^p+\alpha^{p^2}+...+\alpha^{p^{e-1}}$. As in \cite{fuselier}, we begin by repeatedly using the elementary identity from \cite{ire}
\begin{equation}\label{thetasum}
\sum_{z\in \F_q}\theta(zP(x,y))=\left\{
                                    \begin{array}{ll}
                                      q & \hbox{if $P(x,y)=0$} \\
                                      0 & \hbox{if $P(x,y)\neq0$}
                                    \end{array}
                                  \right.
\end{equation}
to express the number of points as
\begin{align*}
q\cdot (\#E(\F_q)-1)=&\sum_{z\in \F_q}\sum_{x,y \in \F_q}\theta (z P(x,y))\\
=&q^2+\underbrace{\sum_{z \in \F_q^*}\theta(zb)}_A+ \underbrace{\sum_{y,z\in \F_q^*}\theta(zb) \theta(-zy^2)}_B
+\underbrace{\sum_{z,x\in \F_q^*}\theta(zx^3)\theta(zax)\theta(zb)}_C\\
&+\underbrace{\sum_{x,y,z\in \F_q^*}\theta(zP(x,y))}_D.
\end{align*}
We will evaluate each of these labeled terms using the following lemma from \cite{fuselier}.

\begin{lemma}[\cite{fuselier}, Lemma 3.3]\label{theta} For all $\alpha \in \F_q^*$,
$$\theta(\alpha)=\frac{1}{q-1}\sum_{m=0}^{q-2}G_{-m}T^m(\alpha),$$ where $T$ is a fixed generator of the character group and $G_{-m}$ is the Gauss sum $G_{-m}:=G(T^{-m})=\sum_{x\in \F_q}T^{-m}(x)\theta(x)$.
\end{lemma}

Since Lemma \ref{theta} holds only when the parameter is nonzero, we require that $a\neq 0$ and $b\neq 0$, or equivalently $j(E)\neq 0,1728$.
For $A$ we have
$$A=\frac{1}{q-1}\sum_i G_{-i} T^i(b) \sum_z T^i(z)=G_0=-1$$
where the second equality follows from the fact that the innermost sum is 0 unless $i=0$, at which it is $q-1$.
Similarly,
$$B=\frac{1}{(q-1)^2}\sum_{i,j}G_{-i}G_{-j}T^i(b)T^j(-1)\sum_z T^{i+j}(z) \sum_y T^{2j}(y)$$
and the inner sums here are nonzero only when $2j=0$ and $j=-i$. Plugging in these values gives
$$B=1+q T^{\frac{q-1}{2}}(b).$$
We simply expand $C$ (because it will cancel soon) to get
$$C=\frac{1}{(q-1)^3}\sum_{i,j,k}G_{-i}G_{-j}G_{-k}T^j(a)T^k(b)\sum_z T^{i+j+k}(z) \sum_x T^{3i+j}(x).$$
Finally, we expand D
\begin{align*}
D=&\frac{1}{(q-1)^4}\sum_{i,j,k,l}G_{-i}G_{-j}G_{-k}G_{-l}T^j(a)T^k(b)T^l(-1)\\
& \cdot \sum_z T^{i+j+k+l}(z)\sum_x T^{3i+j}(x) \sum_y T^{2l}(y).
\end{align*}
Again, the only nonzero terms occur when $l=0$ or $l=\frac{q-1}{2}$. The $l=0$ term is
$$\frac{1}{(q-1)^3}\sum_{i,j,k}G_{-i}G_{-j}G_{-k}G_{0}T^j(a)T^k(b)\sum_z T^{i+j+k}(z)\sum_x T^{3i+j}(x) $$
and since $G_0=-1$ this term cancels with the $C$ term in the expression for $q(|E(\F_q)|-1)$.
Assuming now that $l=\frac{q-1}{2}$, both inner sums will be nonzero only when $j=-3i$ and $k=\frac{q-1}{2}+2i$. We may write this term as
\begin{equation}\label{dterm}
D_{\frac{q-1}{2}}:=\frac{1}{q-1}\sum_{i}G_{-i} G_{3i} G_{-\frac{q-1}{2}-2i} G_{\frac{q-1}{2}} T^{-3i}(a) T^{\frac{q-1}{2}+2i}(b)T^{\frac{q-1}{2}}(-1)
\end{equation}
and we may reduce this equation further by noting that $q\equiv 1 \pmod{4}$ implies that $G_{\frac{q-1}{2}}=\sqrt{q}$ and $T^{\frac{q-1}{2}}(-1)=1$. Combining the above results yields the expression

\begin{equation}
q(|E(\F_q)|-1)=q^2+q\cdot T^{\frac{q-1}{2}}(b)+\frac{\sqrt{q}}{q-1}\sum_{i}G_{-i} G_{3i} G_{-\frac{q-1}{2}-2i}  T^{-3i}(a) T^{\frac{q-1}{2}+2i}(b)
\end{equation}

Now we expand $G_{3i}$ and $G_{-\frac{q-1}{2} -2i}=G_{-2(\frac{q-1}{4}+i)}$ using the Davenport-Hasse relation from \cite{lang}.
 \begin{theorem}[Davenport-Hasse Relation \cite{lang}]\label{hasdavgen} Let  $m$ be a positive integer and let $q=p^e$ be a prime power such that $q\equiv 1 \pmod{m}$. Let $\theta$ be the additive character on $\F_q$ defined by $\theta(\alpha)=\zeta^{\tr \alpha}$, where $\zeta=e^{2 \pi i /p}$. For multiplicative characters $\chi, \psi \in \widehat{\F}_q^*$ we have
$$\prod_{\chi^m=1}G(\chi \psi)=-G(\psi^m)\psi(m^{-m})\prod_{\chi^m=1}G(\chi).$$
\end{theorem}
The cases for $m=3$, $m=2$ may be restated as follows.
\begin{cor}[Davenport-Hasse for $q\equiv 1\pmod{3}$]\label{hasdav} If $k\in \Z$ and $q$ satisfies $q\equiv 1 \pmod{3}$ then $$G_kG_{k+\frac{q-1}{3}}G_{k+\frac{2(q-1)}{3}}=qT^{-k}(27)G_{3k}.$$ \end{cor}

\begin{cor}[Davenport-Hasse for $q\equiv 1 \pmod{2}$] If $k\in \Z$ and $q$ satisfies $q\equiv 1\pmod{2}$ then
$$G_{-k}G_{-\frac{q-1}{2}-k}=G_{-2k}T^k(4)G_{\frac{q-1}{2}}.$$
\end{cor}

We may then write
\begin{align*}
G_{3i}=&\frac{G_i G_{i+\frac{q-1}{3}} G_{i+\frac{2(q-1)}{3}} T^i(27)}{q}\\
G_{-\frac{q-1}{2}-2i}=&\frac{G_{-i-\frac{q-1}{4}}G_{-i-\frac{3(q-1)}{4}}}{G_{\frac{q-1}{2}}T^{i+\frac{q-1}{4}}(4)}.
\end{align*}
Plugging this in to equation \ref{dterm} gives
{\small
$$D_{\frac{q-1}{2}}=\frac{T^{\frac{q-1}{2}}(b)}{q(q-1)T^{\frac{q-1}{4}}(4)}\sum_i G_{-i}G_i G_{i+\frac{q-1}{3}} G_{i+\frac{2(q-1)}{3}} G_{-i-\frac{q-1}{4}}G_{-i-\frac{3(q-1)}{4}}T^i\left(\frac{27b^2}{4a^3}\right).$$}
In order to write $a(E(\F_q))$ as a finite field hypergeometric function, we use the fact that if $T^{m-n}\neq \epsilon$, then
\begin{equation}\label{jacgauss}
\left({ T^m \atop T^n } \right)=\frac{G_mG_{-n}T^n(-1)}{G_{m-n}q}.
\end{equation}
This is another way of stating the classical identity $G(\chi_1)G(\chi_2)=J(\chi_1,\chi_2)G(\chi_1\chi_2)$ which holds whenever $\chi_1\chi_2$ is a primitive character.

Now use equation \ref{jacgauss} to write
\begin{align}
G_{i+\frac{q-1}{3}}G_{-i-\frac{q-1}{4}}=&\bin{T^{i+\frac{q-1}{3}}}{T^{i+\frac{q-1}{4}}}G_{\frac{q-1}{12}}q T^{i+\frac{q-1}{4}}(-1) \label{hd1}\\
G_{i+\frac{2(q-1)}{3}}G_{-i-\frac{3(q-1)}{4}}=&\bin{T^{i+\frac{2(q-1)}{3}}}{T^{i+\frac{3(q-1)}{4}}}G_{-\frac{q-1}{12}}q T^{i+\frac{3(q-1)}{4}}(-1) \label{hd2}
\end{align}
and plugging  in equations \ref{hd1}, \ref{hd2} gives
{\small
$$D_{\frac{q-1}{2}}=\frac{T^{\frac{q-1}{2}}(b)q}{(q-1)T^{\frac{q-1}{4}}(4)}G_{\frac{q-1}{12}}G_{-\frac{q-1}{12}}
\sum_i G_i G_{-i} \bin{T^{i+\frac{q-1}{3}}}{T^{i+\frac{q-1}{4}}}\bin{T^{i+\frac{2(q-1)}{3}}}{T^{i+\frac{3(q-1)}{4}}}
T^i\left(\frac{27b^2}{4a^3}\right).$$}
Since $G_{\frac{q-1}{12}}G_{-\frac{q-1}{12}}=q T^{\frac{q-1}{12}}(-1)$ we may write
$$D_{\frac{q-1}{2}}=\frac{T^{\frac{q-1}{2}}(b)T^{\frac{q-1}{12}}(-1)q^2}{(q-1)T^{\frac{q-1}{4}}(4)}
\sum_i G_i G_{-i} \bin{T^{i+\frac{q-1}{3}}}{T^{i+\frac{q-1}{4}}}\bin{T^{i+\frac{2(q-1)}{3}}}{T^{i+\frac{3(q-1)}{4}}}
T^i\left(\frac{27b^2}{4a^3}\right).$$

Next, we eliminate the $G_iG_{-i}$ term. If $i\neq 0$ then $G_iG_{-i}=q T^i(-1)$, and if $i=0$ then $G_i G_{-i}=1=q T^i(-1) - (q-1)$. Plugging in the appropriate identities for each $i$ we may write equation \ref{dterm} as
\begin{align*}
D_{\frac{q-1}{2}}=&\frac{T^{\frac{q-1}{2}}(b)T^{\frac{q-1}{12}}(-1) q^3}{(q-1)T^{\frac{q-1}{4}}(4)}\sum_i \bin{T^{i+\frac{q-1}{3}}}{T^{i+\frac{q-1}{4}}}\bin{T^{i+\frac{2(q-1)}{3}}}{T^{i+\frac{3(q-1)}{4}}}
T^i\left(-\frac{27b^2}{4a^3}\right)\\
&- \frac{T^{\frac{q-1}{2}}(b)T^{\frac{q-1}{12}}(-1)q^2}{T^{\frac{q-1}{4}}(4)} \bin{T^{\frac{q-1}{3}}}{T^{\frac{q-1}{4}}}\bin{T^{\frac{2(q-1)}{3}}}{T^{\frac{3(q-1)}{4}}}.
\end{align*}
By equation \ref{jacgauss} we have
$$\bin{T^{\frac{q-1}{3}}}{T^{\frac{q-1}{4}}}\bin{T^{\frac{2(q-1)}{3}}}{T^{\frac{3(q-1)}{4}}}=
\frac{G_{\frac{q-1}{3}} G_{-\frac{q-1}{4}}G_{\frac{2(q-1)}{3}}G_{-\frac{3(q-1)}{4}}}{G_{\frac{q-1}{12}}G_{-\frac{q-1}{12}}q^2}=
\frac{T^{\frac{q-1}{3}}(-1)T^{\frac{q-1}{4}}(-1)}{qT^{\frac{q-1}{12}}(-1)}$$
and so the second term reduces to $- \frac{T^{\frac{q-1}{2}}(b)T^{\frac{q-1}{4}}(-1) q }{T^{\frac{q-1}{4}}(4)}$. Equation \ref{dterm} becomes
\begin{align*}
D_{\frac{q-1}{2}}=&\frac{T^{\frac{q-1}{2}}(b)T^{\frac{q-1}{12}}(-1) q^3}{(q-1)T^{\frac{q-1}{4}}(4)}\sum_i \bin{T^{i+\frac{q-1}{3}}}{T^{i+\frac{q-1}{4}}}\bin{T^{i+\frac{2(q-1)}{3}}}{T^{i+\frac{3(q-1)}{4}}}
T^i\left(-\frac{27b^2}{4a^3}\right)\\
&- \frac{T^{\frac{q-1}{2}}(b)T^{\frac{q-1}{4}}(-1) q }{T^{\frac{q-1}{4}}(4)}.
\end{align*}
Make the substitution $i \to i-\frac{q-1}{4}$ to get
\begin{align*}
D_{\frac{q-1}{2}}=&T^{\frac{q-1}{12}}(-1)q^2T^{\frac{q-1}{4}}\left(\frac{-a^3}{27}\right)
\cdot \frac{q}{q-1}\sum_i \bin{T^{i+\frac{q-1}{12}}}{T^{i}}\bin{T^{i+\frac{5(q-1)}{12}}}{T^{i+\frac{q-1}{2}}}T^{i}\left(-\frac{27b^2}{4a^3}\right)\\
&- \frac{T^{\frac{q-1}{2}}(b)T^{\frac{q-1}{4}}(-1) q }{T^{\frac{q-1}{4}}(4)}\\
=&T^{\frac{q-1}{12}}(-1)q^2T^{\frac{q-1}{4}}\left(\frac{-a^3}{27}\right)
\Fha{T^{\frac{q-1}{12}}}{ T^{\frac{5(q-1)}{12}}}{ T^{\frac{q-1}{2}} }{ -\frac{27b^2}{4a^3}}_q \\
&- \frac{T^{\frac{q-1}{2}}(b)T^{\frac{q-1}{4}}(-1) q }{T^{\frac{q-1}{4}}(4)}.
\end{align*}
Putting this all together then gives
\begin{align*}
q(|E(\F_q)|-1)=&q^2+q T^{\frac{q-1}{2}}(b)-\frac{T^{\frac{q-1}{2}}(b)T^{\frac{q-1}{4}}(-1) q }{T^{\frac{q-1}{4}}(4)}\\
& \ +T^{\frac{q-1}{12}}(-1)T^{\frac{q-1}{4}}\left(\frac{-a^3}{27}\right) q^2 \cdot
\Fha{ T^{\frac{q-1}{12}}}{ T^{\frac{5(q-1)}{12}}}{ T^{\frac{q-1}{2}}}{ -\frac{27b^2}{4a^3}}_q.
\end{align*}
Equivalently,
\begin{align*}
|E(\F_q)|=&q+1+T^{\frac{q-1}{2}}(b)\left(1-\frac{T^{\frac{q-1}{4}}(-1)}{T^{\frac{q-1}{4}}(4)}\right)\\
&+T^{\frac{q-1}{12}}(-1)T^{\frac{q-1}{4}}\left(\frac{-a^3}{27}\right) q\cdot
\Fha{T^{\frac{q-1}{12}}}{ T^{\frac{5(q-1)}{12}}}{T^{\frac{q-1}{2}}}{ -\frac{27b^2}{4a^3}}_q.
\end{align*}
Noting that $T^{\frac{q-1}{12}}(-1)T^{\frac{q-1}{4}}\left(\frac{-a^3}{27}\right)=T^{\frac{q-1}{4}}\left(\frac{a^3}{27}\right)$ and $T^{\frac{q-1}{4}}(-1)=T^{\frac{q-1}{2}}(2)$ (both depend only on the congruence of $q\pmod{8}$)
reduces the expression to
$$|E(\F_q)|=q+1+
T^{\frac{q-1}{4}}\left(\frac{a^3}{27}\right) q\cdot
\Fha{T^{\frac{q-1}{12}}}{ T^{\frac{5(q-1)}{12}}}{T^{\frac{q-1}{2}} }{ -\frac{27b^2}{4a^3}}_q.$$
Since $a(E(\F_q))=q+1-|E(\F_q)|$, we have proven that
$$a(E(\F_q))=-q \cdot T^{\frac{q-1}{4}}\left(\frac{a^3}{27}\right) \cdot
\Fha{T^{\frac{q-1}{12}}}{ T^{\frac{5(q-1)}{12}} }{ T^{\frac{q-1}{2}}  }{ -\frac{27b^2}{4a^3}}_q .$$
\end{proof}

\subsection{Hypergeometric Transformation Laws and the Proof of Theorem \ref{genweier}}

We now prove Theorem \ref{genweier} as a consequence of Theorem \ref{weier} and the following transformation laws found in \cite{greene} given here for the special case of $\Fh$ functions.

\begin{theorem}[\cite{greene}\label{oneminus} Theorem 4.4(i)] For characters $A,B,C$ of $\F_q$ and $x\in \F_q$, $x\neq 0,1$,
$$\Fha{A}{B}{C}{x}_q=A(-1)\cdot \Fha{A}{B}{AB\overline{C}}{1-x}_q.$$
\end{theorem}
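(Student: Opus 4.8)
The plan is to prove the identity from an Euler-type single-sum (``integral'') representation of $\Fh$, rather than from the binomial-series definition directly. The reason is structural: in the defining sum $\frac{q}{q-1}\sum_\chi \bin{A\chi}{\chi}\bin{B\chi}{C\chi}\chi(x)$ the variable $x$ enters only through $\chi(x)$, and no character manipulation turns $\chi(x)$ into $\chi(1-x)$; by contrast, in an integral representation $x$ sits inside a character argument such as $\bar A(1-xy)$, where it can be moved by a change of summation variable. First I would record such a representation: expanding each binomial coefficient as a Jacobi sum via $\bin{X}{Y}=\frac{Y(-1)}{q}\sum_s X(s)\bar Y(1-s)$ and applying character orthogonality $\sum_\chi \chi(w)=(q-1)$ for $w=1$ (and $0$ otherwise) collapses the resulting triple sum to the constrained single representation
\[
\Fha{A}{B}{C}{x}_q=\frac{C(-1)}{q}\sum_{\substack{s,t\in\F_q\\ stx=(1-s)(1-t)}}A(s)\,B(t)\,\bar C(1-t),
\]
which agrees with Greene's integral representation in \cite{greene}; solving the constraint $stx=(1-s)(1-t)$ for $t$ in terms of $s$ rewrites this as a genuine single sum over $s$.

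Next I would apply this representation to both sides of the asserted identity. On the right I substitute $C\mapsto AB\bar C$ and $x\mapsto 1-x$ and simplify the resulting character prefactor, using $B(-1)^2=A(-1)^2=1$ and $\bar C(-1)=C(-1)$, so that the global constant of $A(-1)\,\Fha{A}{B}{AB\bar C}{1-x}_q$ is brought into the same normalization $C(-1)/q$ as the left-hand side. This reduces the theorem to a single clean identity between two character sums in one variable, one carrying the argument $x$ and the other the argument $1-x$.

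The crux is to produce the change of variables matching these two sums. Locating the points at which the character arguments degenerate---$s=0,1,1/x$ on one side and $s=0,1,1/(1-x)$ on the other---tells me which point must map to which (the factor carrying $B$ should be fixed, while the factors carrying $A\bar C$ and $\bar BC$ must have their marked points exchanged), and the unique fractional-linear map with this incidence pattern is $s\mapsto \frac{1-xs}{1-x}$. Substituting this and splitting every character of a product through $\chi(\alpha\beta)=\chi(\alpha)\chi(\beta)$, the $s$-dependent factors reproduce the target summand termwise, while the $s$-independent factors, together with the two prefactors and the identity $\epsilon(x)=C(x)\bar C(x)=1$ (valid since $x\neq0$), collapse to exactly $A(-1)$, the constant asserted by the theorem.

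I expect the main obstacle to be bookkeeping rather than ideas: under the Möbius substitution each factor such as $B(1-s)$ breaks into several characters evaluated at $x$, $1-x$, $-1$, and the new variable, and one must verify that all the spurious pieces cancel and leave precisely $A(-1)$. A secondary technical point is to check that the degenerate terms---where a summation variable, the denominator $1-x$, or an argument $1-xy$ vanishes, or where the nontriviality hypotheses behind the Jacobi-sum identities fail---contribute nothing; here the standing assumption $x\neq0,1$ is exactly what is needed.
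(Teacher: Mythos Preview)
The paper does not give a proof of this statement at all: it is quoted verbatim as Theorem 4.4(i) of Greene \cite{greene} and used as a black box in the derivation of Theorem \ref{genweier} from Theorem \ref{weier}. So there is no ``paper's own proof'' to compare your proposal against.

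That said, your plan is sound and is essentially the method Greene himself uses for the transformations in \S4 of \cite{greene}: pass from the binomial-series definition to the single-sum (Euler integral) representation, and then match the two sides by a fractional-linear change of variable in the summation index. Your identification of the correct M\"obius map via the three degenerate points is the right heuristic, and $s\mapsto (1-xs)/(1-x)$ indeed permutes the relevant singular set $\{0,1,1/x\}\leftrightarrow\{0,1,1/(1-x)\}$.

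One small bookkeeping slip: after substituting $C\mapsto AB\bar C$ and multiplying by $A(-1)$, the prefactor of the right-hand side is
\[
A(-1)\cdot\frac{AB\bar C(-1)}{q}=\frac{B(-1)\,C(-1)}{q},
\]
not $C(-1)/q$ as you wrote. This is harmless---the extra $B(-1)$ is exactly what the change of variables produces from the $B$-factor when you split $B\!\left(\frac{1-s}{1-sx}\right)$ through the substitution---but you should track it so that the final constant really collapses to $A(-1)$. Your closing caveat about verifying the degenerate terms under the hypothesis $x\neq0,1$ is well placed; that hypothesis is precisely what keeps the M\"obius map invertible and the excluded summands genuinely null.
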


\begin{theorem}[\cite{greene} Theorem 4.2(ii)] \label{oneover} For characters $A,B,C$ of $\F_q$ and $x\in \F_q^*$,
$$\Fha{A}{B}{C}{x}_q=ABC(-1)\overline{A}(x)\cdot \Fha{A}{A\overline{C}}{A\overline{B}}{\frac{1}{x}}_q.$$
\end{theorem}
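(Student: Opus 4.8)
The plan is to prove the identity directly from the definition of the Gaussian hypergeometric series, reducing it to a single pointwise identity among the normalized Jacobi-sum coefficients $\bin{\cdot}{\cdot}$. First I would expand both sides. The left side is $\frac{q}{q-1}\sum_{\chi}\bin{A\chi}{\chi}\bin{B\chi}{C\chi}\chi(x)$. For the right side, since $x\neq 0$ we may write $\chi(1/x)=\bar\chi(x)$, so that $\bar A(x)\chi(1/x)=\overline{A\chi}(x)$, and the right side becomes $ABC(-1)\frac{q}{q-1}\sum_{\chi}\bin{A\chi}{\chi}\bin{A\bar C\chi}{A\bar B\chi}\overline{A\chi}(x)$.

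The key structural move is to reindex the right-hand sum by $\psi=\overline{A\chi}$, i.e.\ $\chi=\bar A\bar\psi$; as $\chi$ runs over $\widehat{\F_q^*}$ so does $\psi$. Under this substitution $A\chi\mapsto\bar\psi$, $\chi\mapsto\bar A\bar\psi$, $A\bar C\chi\mapsto\bar C\bar\psi$, $A\bar B\chi\mapsto\bar B\bar\psi$, and $\overline{A\chi}(x)\mapsto\psi(x)$, so the right side becomes $ABC(-1)\frac{q}{q-1}\sum_{\psi}\bin{\bar\psi}{\bar A\bar\psi}\bin{\bar C\bar\psi}{\bar B\bar\psi}\psi(x)$. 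Renaming $\psi$ to $\chi$ and comparing the two sums term by term, it suffices to verify, for every character $\chi$, the pointwise identity $\bin{A\chi}{\chi}\bin{B\chi}{C\chi}=ABC(-1)\bin{\bar\chi}{\bar A\bar\chi}\bin{\bar C\bar\chi}{\bar B\bar\chi}$.

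To establish this I would first record the reflection rule $\bin{A}{B}=A(-1)B(-1)\bin{\bar B}{\bar A}$, which follows from the definition $\bin{A}{B}=\frac{B(-1)}{q}\sum_x A(x)\bar B(1-x)$ by the substitution $x\mapsto 1-x$ and comparison with the analogous expression for $\bin{\bar B}{\bar A}$; crucially it holds for \emph{all} characters, with no genericity hypothesis. Applying it to each factor on the right, and using $\chi(-1)^2=1$, gives $\bin{\bar\chi}{\bar A\bar\chi}=A(-1)\bin{A\chi}{\chi}$ and $\bin{\bar C\bar\chi}{\bar B\bar\chi}=B(-1)C(-1)\bin{B\chi}{C\chi}$. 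Multiplying these by $ABC(-1)=A(-1)B(-1)C(-1)$ produces the scalar $\big(A(-1)B(-1)C(-1)\big)^2=1$, so the right side collapses to $\bin{A\chi}{\chi}\bin{B\chi}{C\chi}$, which is the pointwise identity.

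I expect the only genuine subtlety---and the reason I would route the argument through the Jacobi-sum reflection rule rather than converting each coefficient to Gauss sums via \ref{jacgauss}---to be the treatment of degenerate characters. The Gauss-sum formula \ref{jacgauss} is valid only when $T^{m-n}\neq\epsilon$, which would force a separate analysis of the finitely many exceptional $\chi$; by contrast the reflection identity is valid uniformly, so the term-by-term comparison needs no exceptional-case bookkeeping. The remaining points to check are purely routine: that $\chi\mapsto\bar A\bar\psi$ is a bijection of the character group, and that the prefactor signs $A(-1),B(-1),C(-1)\in\{\pm1\}$ are combined correctly.
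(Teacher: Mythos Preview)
Your argument is correct. The reindexing $\psi=\overline{A\chi}$ and the reflection identity $\bin{A}{B}=A(-1)B(-1)\bin{\bar B}{\bar A}$ (which indeed holds for all characters, as you note, by the change of variable $x\mapsto 1-x$ in the defining sum) reduce the claim to a termwise equality, and the sign bookkeeping is right.

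There is nothing to compare against, however: the paper does not prove this statement at all. It is quoted verbatim from Greene's paper \cite{greene} (Theorem~4.2(ii)) and used as a black-box transformation law in the derivation of Theorem~\ref{genweier} from Theorem~\ref{weier}. So your proposal supplies a self-contained elementary proof where the paper simply cites the result.
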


\begin{proof}[Proof of Theorem \ref{genweier}]
 We begin by noting that we may apply  Theorem \ref{oneminus} to the the expression in Theorem \ref{weier} because the parameter $-\frac{27b^2}{4a^3}$ will equal 1 if and  only if the discriminant of $E$ is 0, which we exclude. Similarly, it will equal 0 if and only if  $b=0$, in which case $j=1728$, and we exclude this case as well. So we begin by applying Theorem \ref{oneminus} to obtain the expression
$$a(E(\F_q))=-qT^{\frac{q-1}{4}}\left(-\frac{a^3}{27}\right)\Fha{T^{\frac{q-1}{12}}}{T^{\frac{5(q-1)}{12}}}{\varepsilon}{\frac{4a^3+27b^2}{4a^3}}_q.$$
Applying Theorem \ref{oneover} to this then gives
{\footnotesize
\begin{align*}
a(E(\F_q))=& -q \cdot T^{\frac{q-1}{4}}\left(\frac{-a^3}{27}\right)T^{\frac{q-1}{12}}\left(\frac{4a^3}{4a^3+27b^2}\right)\Fha{T^{\frac{q-1}{12}}}{T^{\frac{q-1}{12}}}{T^{\frac{2(q-1)}{3}}}{\frac{4a^3}{4a^3+27b^2}}_q\\
=&-q\cdot T^{\frac{q-1}{12}}\left(\frac{-4a^{12}}{3^9(4a^3+27b^2)}\right)\Fha{T^{\frac{q-1}{12}}}{T^{\frac{q-1}{12}}}{T^{\frac{2(q-1)}{3}}}{\frac{4a^3}{4a^3+27b^2}}_q\\
=&-q\cdot T^{\frac{q-1}{12}}\left(\frac{a^{12}}{3^{12}}\cdot \frac{4^3 3^3}{-16(4a^3+27b^2)}\right)\Fha{T^{\frac{q-1}{12}}}{T^{\frac{q-1}{12}}}{T^{\frac{2(q-1)}{3}}}{\frac{4a^3}{4a^3+27b^2}}_q\\
=&-q\cdot T^{\frac{q-1}{12}}\left(\frac{1728}{-16(4a^3+27b^2)}\right)\Fha{T^{\frac{q-1}{12}}}{T^{\frac{q-1}{12}}}{T^{\frac{2(q-1)}{3}}}{\frac{4a^3}{4a^3+27b^2}}_q\\
=&-q\cdot  T^{\frac{q-1}{12}}\left(\frac{1728}{\Delta(E)}\right)\Fha{T^{\frac{q-1}{12}}}{T^{\frac{q-1}{12}}}{T^{\frac{2(q-1)}{3}}}{\frac{j(E)}{1728}}_q
\end{align*}}
where $\Delta(E)=-16(4a^3+27b^2)$ is the discriminant of $E$ and $j(E)=\frac{1728\cdot 4a^3}{4a^3+27b^2}$ is the $j$-invariant of $E$.

\end{proof}

\bibliographystyle{amsplain}

\end{document}